\DeclareFontFamily{U}{wncy}{}
\DeclareFontShape{U}{wncy}{m}{n}{<->wncyr10}{}
\DeclareSymbolFont{mcy}{U}{wncy}{m}{n}
\DeclareMathSymbol{\Sh}{\mathord}{mcy}{"58} 
\newcommand{\defi}[1]{\textsf{#1}} 
\def\act#1#2%
\newcommand{\A}{{\mathbb A}}
\newcommand{\Xbar}{{\overline{X}}}
\newcommand{\calB}{{\mathcal B}}
\newcommand{\fdesc}{\textup{f-desc}}
\DeclareMathOperator{\Br}{Br}
\DeclareMathOperator{\HH}{H}
\DeclareMathOperator{\Spec}{Spec}
\DeclareMathOperator{\Sec}{Sec}
\newcommand{\HPdesc}{\operatorname{HP}^\fdesc}
\newcommand{\WAdesc}{\operatorname{WA}^\fdesc}
\newcommand{\HPmw}{\operatorname{HP}^{\textup{MW}}}
\newcommand{\WAmw}{\operatorname{WA}^{\textup{MW}}}
\newcommand{\Sel}{\operatorname{Sel}}
\newcommand{\res}{\operatorname{res}}
\newtheorem{Theorem}{Theorem}[section]
\newtheorem{Lemma}[Theorem]{Lemma}
\newtheorem{Proposition}[Theorem]{Proposition}
\newtheorem{Corollary}[Theorem]{Corollary}
\numberwithin{equation}{section}
\begin{document}

\title{Weak approximation versus the Hasse principle for subvarieties of abelian varieties}

\author{Brendan Creutz}
\address{School of Mathematics and Statistics, University of Canterbury, Private Bag 4800, Christchurch 8140, New Zealand}
\email{brendan.creutz@canterbury.ac.nz}
\urladdr{http://www.math.canterbury.ac.nz/\~{}bcreutz}

\begin{abstract}
	For varieties over global fields, weak approximation in the space of adelic points can fail. For a subvariety of an abelian variety one expects this failure is always explained by a finite  descent obstruction, in the sense that the rational points should be dense in the set of (modified) adelic points surviving finite descent. We show that this follows from the a priori weaker assumption that finite descent is the only obstruction to the existence of rational points. We also prove a similar statement for the obstruction coming from the Mordell-Weil sieve.
\end{abstract}

\maketitle

\section{Introduction}

Let $X$ be a smooth, projective, geometrically irreducible variety over a global field $k$. The set $X(\A_k)_{\bullet}^{\fdesc}$ of (modified) adelic points surviving descent by all $X$-torsors under finite group schemes is a closed subset of $X(\A_k)_{\bullet}$ containing the set of rational points $X(k)$. Here $X(\A_k)_{\bullet}$ is the set of connected components of the adelic points $X(\A_k) = \prod_vX(k_v)$, endowed with the quotient topology.

Consider the following statements, in which $\overline{X(k)}$ is used to denote the topological closure of $X(k)$ in $X(\A_k)_\bullet$.
\begin{align*}\label{HPdesc}
	 \HPdesc: & \quad\quad\quad \text{If $X(k) = \emptyset$, then $X(\A_k)_{\bullet}^{\fdesc} = \emptyset$.}\\
	 \WAdesc: & \quad\quad\quad X(\A_k)_{\bullet}^{\fdesc} = \overline{X(k)}
\end{align*}
 One says that the finite descent obstruction is the only obstruction to the Hasse principle (resp. to weak approximation) when $\HPdesc$ holds (resp. when $\WAdesc$ holds). 
 
 For $X \subset A$ a subvariety of an abelian variety we also consider analogous statements for the Mordell-Weil sieve, $X(\A_k)_\bullet \cap \overline{A(k)} \subset A(\A_k)_{\bullet}$.
		\begin{align*} 
			\HPmw: \quad\quad \quad & \text{If $X(k) = \emptyset$, then $X(\A_k)_{\bullet}\cap\overline{A(k)} = \emptyset$}\,. \\
			\WAmw: \quad\quad \quad & X(\A_k)_{\bullet} \cap \overline{A(k)} = \overline{X(k)}
		\end{align*}

For a given variety, or class of varieties, it is clear that $\WAdesc$ implies $\HPdesc$ and similarly for $\WAmw$ and $\HPmw$. The following theorems give converse results for subvarieties of abelian varieties. More precise results are proved in Corollary~\ref{thm:mainthm} and Theorem~\ref{thm:MW} below.

\begin{Theorem}\label{thm:mainthm_nodetail}
	If $\HPdesc$ holds for all subvarieties of abelian varieties, then $\WAdesc$ holds for all subvarieties of abelian varieties.
\end{Theorem}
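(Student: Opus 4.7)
The plan is to argue the contrapositive: if $\WAdesc$ fails for some subvariety $X \subseteq A$, we produce a subvariety $Y$ of an abelian variety $B$ for which $\HPdesc$ fails. Failure of $\WAdesc$ supplies an adelic point $(P_v) \in X(\A_k)_\bullet^{\fdesc}$, a finite set of places $S$, and $\bullet$-open neighborhoods $U_v \ni P_v$ in $X(k_v)$ for $v \in S$ with $X(k) \cap \prod_{v \in S} U_v = \emptyset$.

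The construction uses isogeny descent. Choose a positive integer $n$ so that $P_v + nA(k_v) \subseteq U_v$ in the $\bullet$-topology for every $v \in S$: take $n$ with large valuation at the non-archimedean $v \in S$ so that $nA(k_v)$ is small, and exploit the fact that the $\bullet$-topology at archimedean places collapses $A(k_v)$ to its connected components. Since $(P_v)$ survives the $A[n]$-torsor $[n]\colon[n]^{-1}(X) \to X$, there exist a class $\tau \in H^1(k, A[n])$ and an adelic point $(Q_v)$ of the twisted cover $Y := [n]_\tau^{-1}(X) \subseteq A^\tau$ lifting $(P_v)$. The $A$-torsor $A^\tau$ need not itself be an abelian variety, but it embeds as a closed subvariety of the Weil restriction $B := \Res_{K/k}(A_K)$ for any finite splitting field $K/k$ of $\tau$, and $B$ is an abelian variety over $k$. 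Thus $Y \subseteq B$ is a subvariety of an abelian variety, and any rational point $Q \in Y(k)$ gives $P := [n]_\tau(Q) \in X(k)$ with $P - P_v \in nA(k_v) \subseteq U_v - P_v$ for $v \in S$, contradicting the choice of $U_v$; hence $Y(k) = \emptyset$.

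The crucial step, and the main obstacle, is to show that $(Q_v)$ can be chosen so as to lie in $Y(\A_k)_\bullet^{\fdesc}$. Any torsor $Z \to Y$ under a finite group scheme $G$ composes with $Y \to X$ to yield a torsor under an extension of $A[n]$ by $G$; finite descent survival of $(P_v)$ on $X$ then provides a twist of this composite with a global adelic lift above $(P_v)$, which restricts to a survivor of $Z \to Y$ lifting $(P_v)$. Packaging these torsor-by-torsor lifts into a single element of $Y(\A_k)_\bullet^{\fdesc}$ requires a compactness argument: the sets of lifts of $(P_v)$ surviving each torsor form a cofiltered system of closed subsets of the compact space $\prod_v [n]_\tau^{-1}(P_v) \subseteq Y(\A_k)_\bullet$, whose intersection is therefore nonempty. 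Combined with the vanishing of $Y(k)$, this produces the desired failure of $\HPdesc$ for the subvariety $Y \subseteq B$.
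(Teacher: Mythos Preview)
Your strategy is essentially the paper's, unpackaged. The paper proves a topology theorem (Theorem~\ref{thm:topology}) showing that the images $f(X'(\A_k)_\bullet^{\fdesc})$ of twisted $n$-coverings form a basis for $X(\A_k)_\bullet^{\fdesc}$, and then Corollary~\ref{thm:mainthm} is immediate. The proof of the relevant direction (Lemma~\ref{lem:2}) is exactly your argument: pick $n$ so that $nA(\A_k)_\bullet$ fits in the given neighborhood (Lemma~\ref{lem:topA}), then lift $(P_v)$ to some twist of the $n$-covering. Your Weil restriction step, embedding $A^\tau$ into an honest abelian variety, is a detail the paper sidesteps by working throughout with subvarieties of \emph{torsors} under abelian varieties.

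There is, however, a genuine gap in your compactness paragraph. You fix $\tau$ at the outset, chosen only so that $(P_v)$ lifts adelically to $Y = Y^\tau$, and then attempt to show the lift can be upgraded to lie in $Y(\A_k)_\bullet^{\fdesc}$. Your argument for the nonemptiness of each closed set proceeds via the composite $Z \to Y \to X$: but surviving the composite only produces a twist $Z^\sigma \to X$ with an adelic lift, and that lift lands in $Y^{\bar\sigma}$ for $\bar\sigma$ the image of $\sigma$ in $H^1(k,A[n])$. Nothing forces $\bar\sigma = \tau$, so you have not shown that the fiber in \emph{your} $Y$ meets the survivor set for $Z$. (A secondary imprecision: the composite $Z \to X$ is not automatically a torsor under an extension of $A[n]$ by $G$; you must first pass to a Galois closure over $X$.) The fix is to refrain from fixing $\tau$ in advance and run the compactness argument on the finite disjoint union $\bigsqcup_{\tau \in \Sel(f)} (f^\tau)^{-1}((P_v))$, which is still compact since $\Sel(f)$ is finite. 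This is precisely the content of \cite[Prop.~5.17]{Stoll}, which the paper invokes directly rather than reproving.
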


\begin{Theorem}\label{thm:MWintro}
	Let $A/k$ be an abelian variety over a global field $k$. If $\HPmw$ holds for all closed subvarieties of $A$, then $\WAmw$ holds for all closed subvarieties of $A$.
\end{Theorem}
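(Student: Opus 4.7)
The plan is a contradiction argument producing a closed subvariety of $A$ that violates $\HPmw$. Suppose there is $P \in X(\A_k)_\bullet \cap \overline{A(k)}$ with $P \notin \overline{X(k)}$; then some basic open neighborhood $U = \prod_{v \in S} U_v \times \prod_{v \notin S} X(k_v)$ of $P$ satisfies $U \cap X(k) = \emptyset$. My goal is to construct a closed subvariety $Y \subset A$ with $Y(\A_k)_\bullet \cap \overline{A(k)} \neq \emptyset$ yet $Y(k) = \emptyset$.

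The construction uses multiplication by $n$ combined with a rational translation. Weak Mordell--Weil gives $|A(k)/nA(k)| < \infty$, so for each sufficiently large integer $n$ I can choose $a_n \in A(k)$ that is close to $P_v$ at each $v \in S$ and that lies in the same coset of $n A(\A_k)_\bullet$ as $P$. Writing $P - a_n = n Q_n$ for some $Q_n \in A(\A_k)_\bullet$, an $A[n](\A_k)$-adjustment is used to place $Q_n \in \overline{A(k)}$; this relies on the identity $\overline{A(k)} \cap n A(\A_k)_\bullet = n \overline{A(k)}$, which should follow from the compactness of $\overline{A(k)}$ and the finiteness of $A(k)/nA(k)$. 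Setting $Y_n := [n]^{-1}(X - a_n) \subset A$, a closed subvariety of $A$, we obtain $Q_n \in Y_n(\A_k)_\bullet \cap \overline{A(k)}$, so this set is nonempty.

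It remains to extract the contradiction from the existence of a rational point of the refined $Y_n$. Any $b \in Y_n(k)$ gives $n b + a_n \in X(k)$; to contradict $U \cap X(k) = \emptyset$ one needs $n b + a_n \in U$, equivalently $b_v$ close to $Q_{n,v}$ at each $v \in S$. Since $\HPmw$ supplies $b$ with no approximation guarantee, I would refine the construction by further intersecting $Y_n$ with a translate $c_n + B_n$ of an abelian subvariety $B_n \subset A$, chosen so that $Q_n$ still lies in $(c_n + B_n)(\A_k)_\bullet \cap \overline{A(k)}$ while rational points of $c_n + B_n$ are forced to approximate $Q_n$ at each $v \in S$. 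The principal obstacle I anticipate is this last step: building $(B_n, c_n)$ so that it simultaneously localizes rational points at places in $S$ and preserves adelic survival of $Q_n$ in $\overline{A(k)}$. This will likely require an inductive argument descending through the Poincar\'e decomposition of $A$ into simple factors and invoking $\HPmw$ on subvarieties of successive isogeny quotients. Once achieved, $\HPmw$ applied to the refined $Y_n \cap (c_n + B_n)$ produces a rational $b$ with $n b + a_n \in U \cap X(k)$, giving the desired contradiction.
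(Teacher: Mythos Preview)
Your construction is on the right track---pulling $X$ back along $a \mapsto na + a_n$ is exactly what the paper does---but you have overlooked the one lemma that makes the final step immediate and renders the whole abelian-subvariety workaround unnecessary. The missing fact is that every open neighborhood of $0$ in $A(\A_k)_\bullet$ contains $nA(\A_k)_\bullet$ for some $n$: at nonarchimedean $v$ the group $A(k_v)$ has a finite-index torsion-free pro-$p$ subgroup, and at archimedean $v$ the set $A(k_v)_\bullet$ is finite. Hence, writing $U = V \cap X(\A_k)_\bullet$ for $V$ open in $A(\A_k)_\bullet$, one can choose $n_0$ so that the entire coset $a_{n_0} + n_0 A(\A_k)_\bullet$ lies inside $V$. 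Then \emph{any} $b \in Y_{n_0}(k)$ satisfies $n_0 b + a_{n_0} \in V \cap X(\A_k)_\bullet = U$, with no approximation condition on $b$ at all. Your asserted equivalence ``$nb + a_n \in U$ iff $b_v$ is close to $Q_{n,v}$ at $v \in S$'' is therefore false: what is actually required is that $n(b_v - Q_{n,v})$ be small, and this holds automatically once $nA(k_v)_\bullet$ is small, independently of $b$. The proposed inductive refinement through translates $c_n + B_n$ and the Poincar\'e decomposition is thus not needed, and in any case it is unclear how it could succeed on a simple $A$, which has no nontrivial abelian subvarieties to exploit.

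A secondary point: your justification for placing $Q_n$ in $\overline{A(k)}$ via the identity $\overline{A(k)} \cap nA(\A_k)_\bullet = n\overline{A(k)}$ amounts to injectivity of $A(k)/nA(k) \to A(\A_k)_\bullet/nA(\A_k)_\bullet$, which is not obvious (its kernel is governed by $\Sha^1(k,A[n])$). The paper sidesteps this by working entirely inside $\overline{A(k)} \simeq \widehat{A(k)}$: one chooses a compatible system $P_n \in A(k)$ with $x \in P_n + nA(\A_k)_\bullet$ and $P_{mn} \equiv P_n \bmod nA(k)$ via K\"onig's Lemma, then writes $P_{mn_0} - P_{n_0} = n_0 R_m$ and takes $y = \lim_m R_{m!} \in \overline{A(k)}$ directly, so that $n_0 y + P_{n_0} = x$ by continuity.
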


For a subvariety $X \subset A$ of an abelian variety, the sets appearing in these statements and the set of adelic points cut out by the Brauer-Manin obstruction are related as follows.
\begin{equation}\label{eq:sets}
	\overline{X(k)} \subseteq X(\A_k)_{\bullet}^{\fdesc} \subseteq X(\A_k)_\bullet^{\Br} \subseteq X(\A_k)_{\bullet} \cap A(\A_k)_{\bullet}^{\Br} \stackrel{\star}\supseteq X(\A_k)_{\bullet} \cap \overline{A(k)} \supseteq \overline{X(k)}
\end{equation}

It is conjectured that all of the sets appearing in~\eqref{eq:sets} are equal (See \cite[p. 133]{Skorobogatov} and \cite[Section 8]{Stoll} in the case of a curve inside its Jacobian, and \cite[7.4]{Poonen} and in \cite[Conjecture C]{PoonenVoloch} in general). The containment $\star$ is known to be an equality if the Tate-Shafarevich group of $A$ has no nontrivial divisible elements \cite{Scharaschkin}. The equality of all other sets is known to hold if in addition $A(k)$ is finite \cite{Scharaschkin,Stoll}. In the function field case, equality of all sets in~\eqref{eq:sets} is known for coset free subvarieties of abelian varieties which have no isotrivial isogeny factors and finite separable $p$-torsion \cite{PoonenVoloch} and for nonisotrivial curves of genus $\ge 2$ \cite{CreutzVoloch}.		

The weaker conjectures that $\HPmw$ and $\HPdesc$ hold for subvarieties of abelian varieties are supported by a wealth of empirical evidence as well as by a heuristic of Poonen \cite{Poonen}. (The results there are stated for a curve in its Jacobian, but the argument only relies on the fact that $X$ has positive codimension in $A$.) Theorem~\ref{thm:MWintro} shows that this same heuristic supports the stronger conjecture $\WAmw$ as well.

	A statement similar to Theorem~\ref{thm:mainthm_nodetail} has been observed for curves over number fields by Stoll \cite[Section 8]{Stoll} in which case it is also closely related to a well known result concerning the section conjecture in anabelian geometry which we recall in Section~\ref{sec:sec} which has its origins in work of Nakamura and Tamagawa. Those arguments rely on Faltings' Theorem that $X(k)$ is finite for curves of genus at least $2$ over number fields and so do not immediately generalize. Our proof relies instead on Theorem~\ref{thm:topology} below which provides a description of the toplogy on $X(\A_k)_{\bullet}^{\fdesc}$ in terms of torsors over $X$. This also provides further insight into the connection between $\WAdesc$ and the section conjecture, specifically Proposition~\ref{prop:sec}.

\section{The topology on $X(\A_k)_{\bullet}^{\fdesc}$}\label{sec:topology}

In this section we will prove the following theorem. 

\begin{Theorem}\label{thm:topology}
	Suppose $X \subset T$ is a subvariety of a torsor under an abelian variety over a global field $k$. Let $\calB$ be the collection of subsets of $X(\A_k)_{\bullet}^{\fdesc}$ of the form $U_f = f(X'(\A_k)_{\bullet}^{\fdesc})$, where $f : X' \to X$ is the pullback of a geometrically connected torsor $T' \to T$ under a finite group scheme over $k$. Then $\calB$ is a basis for the subspace topology on $X(\A_k)_{\bullet}^{\fdesc} \subset T(\A_k)_\bullet $.
\end{Theorem}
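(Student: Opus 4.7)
The plan is to verify the two axioms defining a basis for the subspace topology: (i) each $U_f \in \calB$ is open in $X(\A_k)_\bullet^{\fdesc}$; (ii) for every open $V \subseteq X(\A_k)_\bullet^{\fdesc}$ and every $x \in V$, there exists $U_f \in \calB$ with $x \in U_f \subseteq V$.

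For (i), fix $f: X' \to X$ the pullback of a geometrically connected $G$-torsor $T' \to T$ with $G/k$ finite. Writing $f^\sigma: X'^\sigma \to X$ for the twist of $f$ by $\sigma \in H^1(k, G)$, I would establish a finite disjoint decomposition
\[
X(\A_k)_\bullet^{\fdesc} = \bigsqcup_{[\sigma]} U_{f^\sigma},
\]
where $[\sigma]$ ranges over the Selmer set $\Sel_f = \{\sigma : T'^\sigma \text{ is everywhere locally soluble}\}$ modulo $\Sh^1(k, G)$ (both finite because $G$ is finite). Coverage follows from the definition of $\fdesc$, while disjointness stems from the observation that the class of the fiber $f^{-1}(x_v)$ in $H^1(k_v, G)$ is an invariant of $x_v$, so that $x \in U_{f^\sigma}$ forces $\sigma|_v$ to equal this class at every $v$. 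Each $U_{f^\sigma}$ is the continuous image of the closed subset $X'^\sigma(\A_k)_\bullet^{\fdesc}$ of the compact adelic space of the proper variety $X'^\sigma$, hence compact and so closed in the Hausdorff space $X(\A_k)_\bullet$. Finiteness of the decomposition then makes each closed $U_{f^\sigma}$ also open in $X(\A_k)_\bullet^{\fdesc}$.

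For (ii), fix $x = (x_v) \in X(\A_k)_\bullet^{\fdesc}$ and a basic open neighborhood $\prod_v W_v \cap X(\A_k)_\bullet^{\fdesc}$ with $W_v = T(k_v)_\bullet$ for $v$ outside a finite set $S$. The idea is to invoke $n$-descent on $T$: for every $n \geq 1$ there exists a geometrically connected $A[n]$-torsor over $T$ defined over $k$ (arising, for example, from the Galois-equivariant projection $\pi_1(T_{\kbar}) \surjects A[n]$ out of the \'etale fundamental group), whose pullback to $X$ is a cover $f$ in $\calB$; its twists are parameterized by $H^1(k, A[n])$, and a twist $f^\sigma$ has local image at $v$ equal to $(x_v + nT(k_v)) \cap X(k_v)$ precisely when $\sigma|_v$ equals the Kummer class $\delta(x_v) \in H^1(k_v, A[n])$. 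Choosing $n$ divisible by sufficiently high powers of the residue characteristics of the non-archimedean places in $S$ (and by the exponent of $\pi_0(T(k_v))$ for archimedean $v \in S$) forces $x_v + nT(k_v) \subseteq W_v$ for all $v \in S$. The hypothesis $x \in X(\A_k)_\bullet^{\fdesc}$ applied to this cover means $(\delta(x_v))_v$ descends to a global class $\sigma \in H^1(k, A[n])$, and the corresponding $f^\sigma$ then satisfies $x \in U_{f^\sigma} \subseteq \prod_v W_v$.

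The principal obstacle is (ii), specifically the construction of enough geometrically connected $A[n]$-torsors over the possibly non-trivial torsor $T$: since $T$ lacks a $k$-rational base point, a naive map ``$[n]: T \to T$'' is not defined over $k$, and one must work through the Kummer sequence on $T$ to realize every class in $H^1(k, A[n])$ as a geometrically connected $k$-defined cover and thereby verify that the cosets of $nT(k_v)$ really do form a local basis for $T(\A_k)_\bullet$ at $x$. A secondary technical point is the functoriality lemma asserting $f(X'(\A_k)_\bullet^{\fdesc}) = f(X'(\A_k)_\bullet) \cap X(\A_k)_\bullet^{\fdesc}$, needed in the openness argument to identify $U_f$ with the continuous image of a compact set.
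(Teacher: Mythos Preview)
Your overall plan matches the paper's: part (ii) is its Lemma~\ref{lem:2} and part (i) is its Lemma~\ref{lem:3}.

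For (ii) your argument is essentially the paper's. The ``principal obstacle'' you flag --- producing enough geometrically connected $A[n]$-torsors over a torsor $T$ with no $k$-point --- is handled in the paper more cheaply than via $\pi_1$: one uses the given adelic point $x\in T(\A_k)_\bullet$ to obtain a homeomorphism $T(\A_k)_\bullet\to A(\A_k)_\bullet$, $y\mapsto y-x$, and then applies Lemma~\ref{lem:topA} to find $n$ with $nA(\A_k)_\bullet$ inside the translated neighbourhood. An $n$-covering $T'\to T$ (a twist of $[n]:A\to A$) is then taken, and the correct twist containing $x$ is supplied by Stoll's covering result, exactly as you say.

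For (i) your route genuinely differs from the paper's, and your ``secondary technical point'' is a real gap rather than a formality. The paper does \emph{not} argue via a partition into compact pieces; instead it directly exhibits an adelic open set $U=\prod_{v\in S}W_v^c\times\prod_{v\notin S}X(k_v)$, where $S$ is a finite set of places separating the (finitely many) elements of $\Sel(f)$ and each $W_v^c$ is the complement of the closed set $\bigl(\bigcup_\tau f^\tau(Y^\tau(k_v))\bigr)\setminus f(Y(k_v))$, and checks $U\cap X(\A_k)_\bullet^\fdesc=U_f$. Your partition argument, by contrast, requires that $U_{f^\sigma}$ depend only on the class of $\sigma$ in $\Sel(f)/\Sh^1(k,G)$, which is precisely your unproved functoriality lemma $f(X'(\A_k)_\bullet^{\fdesc})=f(X'(\A_k)_\bullet)\cap X(\A_k)_\bullet^{\fdesc}$. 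From Stoll's covering alone one only gets that a point of the right-hand side lifts to $X'^{\tau}(\A_k)_\bullet^{\fdesc}$ for some $\tau\in\Sh^1(k,G)$, not necessarily $\tau=0$; so when $\Sh^1(k,G)\ne 0$ the sets $U_{f^\sigma}$ for $\sigma$ in a fixed $\Sh^1$-coset need not coincide and your disjoint decomposition can fail. The paper's construction sidesteps the need to compare $U_f$ with its $\Sh^1$-twists by working place-by-place with the full adelic images $f^\tau(Y^\tau(k_v))$ rather than with the $\fdesc$-subsets.
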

	
\begin{Corollary}\label{thm:mainthm}
	The following are equivalent for $X \subset T$ a subvariety of a torsor under an abelian variety over a global field $k$.
	\begin{enumerate}
		\item\label{itWA} $\WAdesc$ holds for $X$;
		\item\label{itHP} $\HPdesc$ holds for every $X'/k$ which arises as the pullback of a geometrically connected torsor $T' \to T$ under a finite group scheme over $k$;
		\item\label{itWAall} $\WAdesc$ holds for every $X'/k$ which arises as the pullback of a geometrically connected torsor $T' \to T$ under a finite group scheme over $k$.
	\end{enumerate}
\end{Corollary}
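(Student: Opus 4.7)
The plan is to close a cycle of implications via Theorem \ref{thm:topology}. I propose to establish $(\ref{itWAall}) \Rightarrow (\ref{itHP})$, $(\ref{itWAall}) \Rightarrow (\ref{itWA})$, $(\ref{itHP}) \Rightarrow (\ref{itWA})$, and the key $(\ref{itHP}) \Rightarrow (\ref{itWAall})$. Two of these are immediate: $\WAdesc$ tautologically implies $\HPdesc$ for any variety, and $X$ itself lies in the class of $(\ref{itWAall})$ via the trivial torsor $T \to T$ with trivial structure group. The remaining implication $(\ref{itWA}) \Rightarrow (\ref{itHP})$ required to complete three-way equivalence I expect to handle by a 0-dimensional descent argument, indicated at the end.

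For $(\ref{itHP}) \Rightarrow (\ref{itWA})$, I would apply Theorem \ref{thm:topology} to the pair $X \subset T$: the sets $U_f = f(X'(\A_k)_{\bullet}^{\fdesc})$ form a basis for the topology on $X(\A_k)_{\bullet}^{\fdesc}$. A nonempty basic open $U_f$ has $X'(\A_k)_{\bullet}^{\fdesc} \neq \emptyset$, so hypothesis $(\ref{itHP})$ forces $X'(k) \neq \emptyset$, and then $f(X'(k))$ is a nonempty subset of $X(k) \cap U_f$. Thus $X(k)$ meets every nonempty basic open, giving $\WAdesc$ for $X$.

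The crucial implication is $(\ref{itHP}) \Rightarrow (\ref{itWAall})$, which I would handle by applying Theorem \ref{thm:topology} at each $X' \subset T'$ in the class in place of $X \subset T$. For this I first observe that $T'$ is itself a torsor under an abelian variety: any geometrically connected cover of an abelian variety torsor by a finite $k$-group scheme corresponds to an isogeny $A' \to A$ of abelian varieties, with $T'$ then a torsor under $A'$. Hence Theorem \ref{thm:topology} applies to $X' \subset T'$, yielding a basis for $X'(\A_k)_{\bullet}^{\fdesc}$ by sets $U_g = g(X''(\A_k)_{\bullet}^{\fdesc})$ where $g : X'' \to X'$ is pulled back from a geometrically connected $T'$-torsor $T'' \to T'$ under a finite $k$-group scheme. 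The identity $X'' = X' \times_{T'} T'' = X \times_T T''$, together with the fact that $T'' \to T$ is itself a torsor under the finite $k$-group scheme $\ker(A'' \to A)$ (where $A''$ is the abelian variety under which $T''$ is a torsor), places $X''$ in the class of $(\ref{itHP})$. Applying $(\ref{itHP})$ to $X''$ yields $X''(k) \neq \emptyset$ whenever $U_g$ is nonempty, and $g(X''(k)) \subseteq X'(k) \cap U_g$ gives density of $X'(k)$ in $X'(\A_k)_{\bullet}^{\fdesc}$.

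The main obstacle I anticipate is verifying that the composition $T'' \to T$ truly lies in the required class: composition of torsors under finite $k$-group schemes is in general not a torsor, but the rigidity of abelian varieties rescues the situation, since the tower of isogenies $A'' \to A' \to A$ composes to an isogeny whose kernel is exactly the structure group for $T'' \to T$. For the remaining $(\ref{itWA}) \Rightarrow (\ref{itHP})$, the same Theorem \ref{thm:topology} combined with the fact that finite descent detects the Hasse principle for 0-dimensional $k$-torsors should allow one to promote a rational point of $X$ in $U_f$ (produced by $\WAdesc$) to a rational point of the fiber $X' \times_X \{x\}$, and hence of $X'$, closing the loop.
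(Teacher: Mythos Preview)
Your proposal follows essentially the same architecture as the paper: use Theorem~\ref{thm:topology} to run the implications $(\ref{itHP}) \Rightarrow (\ref{itWA})$ and $(\ref{itHP}) \Rightarrow (\ref{itWAall})$, with the remaining arrows formal. Your treatment of $(\ref{itHP}) \Rightarrow (\ref{itWA})$ is identical to the paper's, and your $(\ref{itHP}) \Rightarrow (\ref{itWAall})$ is the same idea with more detail supplied (the paper says only that the composite $T'' \to T' \to T$ is again a torsor of the required type, whereas you spell out the isogeny tower $A'' \to A' \to A$).

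The one substantive divergence is $(\ref{itWA}) \Rightarrow (\ref{itHP})$. The paper argues the contrapositive in one line: if $X'(k) = \emptyset$ but $X'(\A_k)_\bullet^{\fdesc} \ne \emptyset$, then $U_f$ is a nonempty basic open in $X(\A_k)_\bullet^{\fdesc}$ containing no $k$-point, so $\WAdesc$ fails for $X$. Your direct version produces $x \in X(k) \cap U_f$ from $\WAdesc$ and then proposes to lift $x$ to $X'(k)$ by applying $\HPdesc$ to the $0$-dimensional fiber $X'_x$. These are contrapositives of one another, so the content is the same; but your route inserts an extra step that is not fully justified as written. Knowing that some $x' \in X'(\A_k)_\bullet^{\fdesc}$ sits over $x$ gives $x' \in X'_x(\A_k)_\bullet$, but to invoke $\HPdesc$ for $X'_x$ you would need $x' \in X'_x(\A_k)_\bullet^{\fdesc}$, and surviving descent on $X'$ does not formally imply surviving descent on the closed subscheme $X'_x$ (torsors over $X'_x$ need not extend to $X'$). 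The paper's contrapositive formulation packages the same assertion more economically and avoids this detour; you should adopt it in place of the $0$-dimensional argument.
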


\begin{proof}[Proof of Corollary~\ref{thm:mainthm}]
	We first show that~\eqref{itHP} implies~\eqref{itWA}. Let $x \in X(\A_k)_{\bullet}^{\fdesc}$. It suffices to show that every open subset $U \subset X(\A_k)_{\bullet}^{\fdesc}$ which contains $x$ also contains a $k$-point. By Theorem~\ref{thm:topology}, there is an open subset of $U$ of the from $U_f = f(X'(\A_k)_{\bullet}^{\fdesc}) $ containing $x$ with $f : X' \to X$ the pullback of a geometrically connected torsor $T' \to T$. If we assume $X'$ satisfies $\HPdesc$, then we have $X'(k) \ne \emptyset$ and so $U$ contains a rational point.
	
	To see that~\eqref{itWA} implies~\eqref{itHP}, suppose there is some $X' \to X$ which does not satisfy $\HPdesc$. Then $X'(\A_k)_{\bullet}^{\fdesc}$ is nonempty, but $X'(k) = \emptyset$. By Theorem~\ref{thm:topology} we have that $f(X'(\A_k)_{\bullet}^{\fdesc})$ is a nonempty open subset of $X(\A_k)_{\bullet}^{\fdesc}$ containing no $k$-rational point. So $\overline{X(k)} \ne X(\A_k)_{\bullet}^{\fdesc}$.
	
	Clearly~\eqref{itWAall} implies~\eqref{itWA}, so to complete the proof it now suffices to show that~\eqref{itHP} implies~\eqref{itWAall}. Let $X' \to X$ be the pullback of $T' \to T$. Every torsor $X'' \to X'$ obtained by pulling back a geometrically connected torsor $T'' \to T'$ can be composed to give a torsor over $X$ arising as pullback from a geometrically connected torsor over $T$. Thus, if~\eqref{itHP} holds for $X \subset T$, then it must also hold for $X' \subset T'$ and we can conclude using the impplication~\eqref{itHP} implies~\eqref{itWA} for $X' \subset T'$.
	\end{proof}


\begin{Lemma}\label{lem:topA}
	Let $A$ be an abelian variety over a global field $k$. For every open subset $U \subset A(\A_k)_{\bullet}$ containing the identity there exists an integer $n$ such that $nA(\A_k)_{\bullet} \subset U$. 
\end{Lemma}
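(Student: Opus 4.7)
The strategy is to reduce to purely local calculations at each place. First I would identify $A(\A_k)_{\bullet}$ with the product $\prod_v A(k_v)_\bullet$: for non-archimedean $v$, the space $A(k_v)$ is compact, Hausdorff, and totally disconnected (being the $k_v$-points of a projective variety over a non-archimedean local field), so its component quotient is just $A(k_v)$ itself; for archimedean $v$, $A(k_v)$ is a real Lie group and $A(k_v)_\bullet = A(k_v)/A(k_v)^0$ is a finite abelian group. Since the equivalence relation collapsing connected components is trivial on all but the finitely many archimedean factors, the quotient topology on $A(\A_k)_\bullet$ coincides with the product of the local component-group topologies. Consequently, any open neighborhood $U$ of $0$ contains a basic open of the form $W = \prod_{v \in S}V_v \times \prod_{v \notin S}A(k_v)_\bullet$ for some finite set of places $S$ and open neighborhoods $V_v$ of $0 \in A(k_v)_\bullet$.

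I would then produce, for each $v \in S$, an integer $n_v$ with $n_v A(k_v)_\bullet \subset V_v$. When $v$ is archimedean, $A(k_v)_\bullet$ is a finite group, so taking $n_v$ to be its exponent gives $n_v A(k_v)_\bullet = \{0\} \subset V_v$. When $v$ is non-archimedean, $A(k_v)$ is a profinite group, so the open subgroups of finite index form a neighborhood basis of the identity; hence $V_v$ contains some open subgroup $H_v$ of finite index, and the exponent $n_v$ of the finite quotient $A(k_v)/H_v$ satisfies $n_v A(k_v) \subset H_v \subset V_v$. Taking $n$ to be the least common multiple of the $n_v$ for $v \in S$ then yields
\[
nA(\A_k)_\bullet \;=\; \prod_v nA(k_v)_\bullet \;\subset\; W \;\subset\; U.
\]

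The only step requiring any care is the initial topological identification: for general infinite products of topological groups modulo closed equivalence relations the quotient and product topologies need not agree, but here this issue does not arise because the equivalence relation is the identity on all but finitely many factors (and on each archimedean factor the quotient is by an open subgroup). Granted that, the rest of the argument is a routine application of the local structure of $A(k_v)$: compactness and total disconnectedness in the non-archimedean case, and finiteness of the component group in the archimedean case.
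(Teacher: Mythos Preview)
Your proof is correct and follows the same overall strategy as the paper: reduce to finitely many places via the product description of $A(\A_k)_\bullet$, handle archimedean places by finiteness of the component group, and at each remaining non-archimedean place produce an integer $n_v$ with $n_vA(k_v)\subset V_v$, then take the least common multiple.

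The only genuine difference is in the local non-archimedean step. The paper appeals to Mattuck's theorem (or the theory of the formal group) to obtain a finite-index torsion-free pro-$p$ subgroup of $A(k_v)$, from which the existence of $n_v$ follows. You instead observe directly that $A(k_v)$ is compact, Hausdorff, and totally disconnected, hence profinite, so that open subgroups of finite index form a neighborhood basis of the identity; taking $n_v$ to be the exponent of the finite quotient then suffices. Your route is more elementary and entirely adequate for this lemma, since the finer structural information provided by Mattuck's theorem is not actually used. You are also more careful than the paper in two places: you explicitly justify the identification of the quotient topology on $A(\A_k)_\bullet$ with the product of the local $A(k_v)_\bullet$, and you spell out the least-common-multiple step that the paper leaves implicit.
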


\begin{proof}
	Any open set in $A(\A_k)_\bullet$ is of the form $U = \prod_{v \in S} U_v \times \prod_{v \not\in S} A(k_v)_{\bullet}$ with $U_v \subset A(k_v)_\bullet$ open and $S$ finite. So it suffices to show that every open subset $U_v \subset A(k_v)_\bullet$ contains a set of the form $nA(k_v)$ for some $n$. For archimedean $v$ this is clear since $A(k_v)_\bullet$ is a finite group. For nonarchimedean $v$, 	this follows from the fact that $A(k_v)$ contains a finite index torsion free pro-$p$-subgroup (where $p$ is the residue characteristic of $k_v$). In the number field case this is Mattuck's theorem \cite{Mattuck} and in general follows from properties of the formal group of $A(k_v)$ (e.g., \cite[pp. 116-118]{Serre}).
\end{proof}

\begin{Lemma}\label{lem:2}
	Let $X \subset T$ be a subvariety of a torsor under an abelian variety over a global field $k$. The topology on $X(\A_k)_{\bullet}^{\fdesc}$ generated by the subsets $U_f := f(X'(\A_k)_\bullet^{\fdesc})$ with $f : X' \to X$ the pullback of a geometrically connected torsor $T' \to T$ under a finite abelian group scheme is at least as strong as the subspace topology $X(\A_k)_{\bullet}^{\fdesc} \subset X(\A_k)_{\bullet}$.
\end{Lemma}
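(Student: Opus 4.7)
The plan is to prove the lemma by translating opens in $T(\A_k)_\bullet$ via the $A$-action and then realizing the translates as adelic images of pullbacks of torsors under finite subgroup schemes of $A$. Given an open set $V \ni x$ in the subspace topology on $X(\A_k)_\bullet^{\fdesc}$, I first extend $V$ to an open subset $V' \subset T(\A_k)_\bullet$ with $V = V'\cap X(\A_k)_\bullet^{\fdesc}$. Since $T$ is a torsor under $A$, the map $A(\A_k)_\bullet \to T(\A_k)_\bullet$, $a\mapsto x+a$, is a homeomorphism sending $0$ to $x$, so its inverse carries $V'$ to an open neighborhood of $0$ in $A(\A_k)_\bullet$. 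Lemma~\ref{lem:topA} then supplies an integer $n$ with $x + nA(\A_k)_\bullet \subseteq V'$.

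The heart of the argument is then to realize the coset $x + nA(\A_k)_\bullet$ as the adelic image of a geometrically connected torsor $T' \to T$ under the finite abelian $k$-group scheme $A[n]$. I envisage doing this via the Kummer sequence $0 \to A[n] \to A \to A \to 0$ (the surjection being $[n]$) applied to $T$: the isomorphism classes of geometrically connected $A[n]$-torsors over $T$ form an $H^1(k,A[n])$-torsor, and their adelic images partition $T(\A_k)_\bullet$ into the cosets of $nA(\A_k)_\bullet$. Each such torsor is geometrically connected because over $\bar k$ it becomes the multiplication-by-$n$ map $A_{\bar k}\to A_{\bar k}$. Thus exactly one such torsor $T'$ has $x$ in its adelic image, and for this $T'$ the image is precisely $x + nA(\A_k)_\bullet \subseteq V'$.

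Setting $X' := X \times_T T'$ with pullback $f\colon X' \to X$, we get $U_f = f(X'(\A_k)_\bullet^{\fdesc}) \subseteq V'\cap X(\A_k)_\bullet^{\fdesc} = V$. To verify $x \in U_f$, the fiber-product construction lifts $x$ to some $y \in X'(\A_k)_\bullet$; I then need to promote $y$ to a point of $X'(\A_k)_\bullet^{\fdesc}$. For this I would invoke the standard fact that any finite torsor $Y' \to X'$ can be assembled, via Galois closure and twisting, with $X' \to X$ into a finite torsor over $X$, so that surviving finite descent on $X$ forces the lift $y$ to survive finite descent on $X'$.

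The main obstacle I anticipate is the cohomological construction of $T' \to T$ and the verification of its adelic image. In general $[T] \in H^1(k,A)$ is not $n$-divisible, so no canonical mod-$n$ torsor over $T$ exists and one must parametrize the candidates by the $H^1(k,A[n])$-torsor of twists; checking that the adelic image of the correct twist equals $x + nA(\A_k)_\bullet$ amounts to a careful place-by-place analysis of the connecting homomorphism from the Kummer-style sequence attached to the torsor $T$.
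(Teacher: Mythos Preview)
Your overall strategy is the paper's: translate by the $A$-action, apply Lemma~\ref{lem:topA} to find $n$ with $x + nA(\A_k)_\bullet \subset V'$, take an $n$-covering $T'\to T$ (a twist of $[n]\colon A\to A$), and pull back to $f\colon X'\to X$. The paper then invokes \cite[Prop.~5.17]{Stoll}, which gives $X(\A_k)_\bullet^{\fdesc} = \bigcup_{\tau\in\Sel(f)} f^\tau(X'^\tau(\A_k)_\bullet^{\fdesc})$, so that $x\in U_{f^\tau}$ for some twist $\tau$; since $x$ lies in the adelic image of that twist, this image is again $x+nA(\A_k)_\bullet \subset V'$, and one simply replaces $X'$ by $X'^\tau$.

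The gap in your proposal is precisely this last step. You fix a lift $y\in X'(\A_k)_\bullet$ and claim a Galois-closure argument promotes it to $X'(\A_k)_\bullet^{\fdesc}$. But for a given torsor $Y'\to X'$, extending to a torsor $Z\to X$ and using that $x$ survives $Z$ only yields a lift of $x$ to \emph{some twist} $X'^\tau$ and thence to a twist of $Y'$ over $X'^\tau$; it need not recover your chosen $y$, nor even your untwisted $X'$, and the twist $\tau$ may vary with $Y'$. Producing a single twist and a single lift surviving \emph{all} finite torsors requires finiteness of the Selmer set and a compactness argument over the inverse system --- exactly the content of \cite[Prop.~5.17]{Stoll}, which you do not supply. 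Relatedly, your assertion that ``exactly one'' $T'$ has $x$ in its adelic image is false: those with this adelic image differ by $\Sha^1(k,A[n])$, and the cited result only guarantees that one of them carries an $\fdesc$-lift of $x$. The obstacle you anticipate (constructing $T'$ and identifying its adelic image) is routine; the substantive step is the one you glossed.
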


\begin{proof}
	Suppose $x \in X(\A_k)_{\bullet}^{\fdesc}$ and let $U \subset X(\A_k)_{\bullet}^{\fdesc}$ be an open subset containing $x$. It is enough to find $f$ such that $x \in U_f \subset U$. Then $U = V \cap X(\A_k)_{\bullet}^{\fdesc}$ for some open subset $V \subset T(\A_k)_{\bullet}$. 
	
	Let $A$ denote the abelian variety for which $T$ is a torsor. The torsor structure on $T$ gives a homeomorphism
	\[
		T(\A_k)_{\bullet} \ni y \mapsto y - x \in A(\A_k)_{\bullet}\,,
	\]
	sending $x$ to the identity and $V$ to a neighbourhood $W \subset A(\A_k)_{\bullet}$ of the identity. By Lemma~\ref{lem:topA} there is an integer $n$ such that $nA(\A_k)_{\bullet} \subset W$.
	
	Let $g : T' \to T$ be an $n$-covering (i.e., a twist of the multiplication by $n$ map on $A$) and let $f : X' \to X$ be the pullback to $X$. By \cite[Prop 5.17]{Stoll} we have
	\begin{equation*}\label{eq:fdesc=}
		X(\A_k)_\bullet^{\fdesc} = \bigcup_{\tau \in \Sel(f)} f^\tau(X'^\tau(\A_k)_\bullet^{\fdesc})\,,
	\end{equation*}
	the union ranging over the twists of $f$ for which $X'^\tau$ has adelic points. The result is stated there for $k$ a number field, but the proof also works for global function fields noting that $\Sel(f)$ is also finite in this case by \cite{Milne}. Therefore, by replacing $X'$ with a twist if needed, we can assume $x \in f(X'(\A_k)_\bullet^\fdesc)$.
	
	Note that $f(X'(\A_k)_\bullet) \subset g(T'(\A_k)_{\bullet}) \subset V$ since the image of $g(T'(\A_k)_{\bullet})$ in $A(\A_k)_{\bullet}$ is equal to $nA(\A_k)_\bullet$ which is contained in $W$ by our assumption on $n$. So $x \in U_f := f(X'(\A_k)_{\bullet}^{\fdesc})  \subset U = X(\A_k)_{\bullet}^{\fdesc} \cap V$ as required.
\end{proof}

\begin{Lemma}\label{lem:3}
	Let $X \subset T$ be a subvariety of a torsor under an abelian variety over a global field $k$ and let $Y \to X$ be the pullback of a geometrically connected torsor $T' \to T$ under a finite abelian group scheme $G/k$. Then the set $U_f := f(Y(\A_k)_\bullet^{\fdesc})$ is an open subset of $X(\A_k)_{\bullet}^{\fdesc}$.
\end{Lemma}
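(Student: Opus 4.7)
The plan is to show $U_f$ is open in $X(\A_k)_\bullet^\fdesc$ by exhibiting its complement as a finite union of closed sets, with closedness arising from compactness.

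First, since $Y$ is a closed subvariety of the proper variety $T'$ (a torsor under an abelian variety), $Y$ is proper, so $Y(\A_k)_\bullet$ is compact. The closed subset $Y(\A_k)_\bullet^\fdesc$ is then compact, and its continuous image $U_f = f(Y(\A_k)_\bullet^\fdesc)$ is compact and hence closed in the Hausdorff space $X(\A_k)_\bullet$. The same applies to every twist $f^\tau \colon Y^\tau \to X$, so each $f^\tau(Y^\tau(\A_k)_\bullet^\fdesc)$ is closed. Applying Proposition~5.17 of \cite{Stoll}, together with \cite{Milne} in the function field case, gives $X(\A_k)_\bullet^\fdesc = \bigcup_{\tau \in \Sel(f)} f^\tau(Y^\tau(\A_k)_\bullet^\fdesc)$, a finite union.

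For the openness of $U_f$, the key observation is that the cohomology class $\rho_v(x_v) \in H^1(k_v, G)$ of the fiber $f^{-1}(x_v)$ is locally constant in $x_v$, equals $0$ when $x_v$ lifts to $Y(k_v)$, and equals $\res_v(\tau)$ when $x_v$ lifts to $Y^\tau(k_v)$. Hence for any $\tau \notin \Sh^1(k, G)$, the image $f^\tau(Y^\tau(\A_k)_\bullet)$ is disjoint from $f(Y(\A_k)_\bullet)$ in $X(\A_k)_\bullet$, and in particular $f^\tau(Y^\tau(\A_k)_\bullet^\fdesc) \cap U_f = \emptyset$.

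The remaining step, which I expect to be the main obstacle, is to handle the $\Sh^1$-twists: for $\tau \in \Sh^1(k, G) \cap \Sel(f)$ one needs $f^\tau(Y^\tau(\A_k)_\bullet^\fdesc) \subseteq U_f$, so that the complement of $U_f$ in $X(\A_k)_\bullet^\fdesc$ equals $\bigcup_{\tau \in \Sel(f),\, \tau \notin \Sh^1(k,G)} f^\tau(Y^\tau(\A_k)_\bullet^\fdesc)$, a finite union of closed sets, hence closed. Since $\Sh^1$-twists are everywhere locally trivial, the local isomorphisms $Y^\tau_{k_v} \simeq Y_{k_v}$ over $X_{k_v}$ assemble into a bijection $Y^\tau(\A_k) \simeq Y(\A_k)$ over $X(\A_k)$; the task reduces to checking that this bijection preserves finite-descent-surviving adelic points. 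This should follow from a cohomological argument tracking how torsors over $Y$ correspond to torsors over $Y^\tau$ under $\Sh^1$-twisting, so that the local Selmer conditions match and a lift surviving descent on one side transports to one surviving descent on the other.
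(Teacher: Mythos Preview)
Your approach and the paper's differ in packaging but converge on the same crux. The paper does not argue via global compactness; instead it picks a finite set $S$ of places that separates the images of the elements of $\Sel(f)$ in $\prod_v H^1(k_v,G)$, sets $W_v=\bigl(\bigcup_{\tau\in\Sel(f)}f^\tau(Y^\tau(k_v))\bigr)\setminus f(Y(k_v))$ for each $v\in S$, and exhibits the explicit open set $U=\prod_{v\in S}W_v^c\times\prod_{v\notin S}X(k_v)\subset X(\A_k)$, asserting $U\cap X(\A_k)_\bullet^\fdesc=U_f$. Your route (each $f^\tau(Y^\tau(\A_k)_\bullet^\fdesc)$ is closed by compactness, and the complement of $U_f$ is a finite union of such) is equally legitimate; the paper's version has the small advantage of producing an open subset of the ambient $X(\A_k)_\bullet$ rather than only of the subspace $X(\A_k)_\bullet^\fdesc$.

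Both arguments, however, rest on exactly the step you flag as the main obstacle. In the paper's argument, any $x\in U\cap X(\A_k)_\bullet^\fdesc$ lies in $f^\tau(Y^\tau(\A_k)_\bullet^\fdesc)$ for some $\tau\in\Sel(f)$ with $\res_v(\tau)=0$ for all $v\in S$, hence (by the choice of $S$) for all $v$; so one still needs $f^\tau(Y^\tau(\A_k)_\bullet^\fdesc)\subseteq U_f$ for $\tau\in\Sh^1(k,G)$. The paper simply asserts the final equality without isolating this, so you are in fact being the more careful reader. That said, your sketch for resolving it is not yet a proof: the bijection $Y^\tau(\A_k)\simeq Y(\A_k)$ is assembled from local isomorphisms, not from a $k$-morphism, so there is no automatic transport of global torsors, and ``Selmer conditions match'' needs an actual argument. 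In the present setting one can close the gap by exploiting that $Y\subset T'$ and $Y^\tau\subset T'^\tau$ sit inside torsors under the \emph{same} abelian variety, so that (again via \cite[Prop.~5.17]{Stoll}) their $\fdesc$ sets are computed by pullbacks of $n$-coverings of $T'$ and $T'^\tau$ respectively; twisting by $\tau\in\Sh^1(k,G)$ carries the one family of $n$-coverings bijectively to the other, compatibly with adelic points.
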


\begin{proof}
	We must show that $U_f = U \cap X(\A_k)_{\bullet}^{\fdesc}$ for some open subset $U \subset X(\A_k)$. By \cite[Prop. 5.17]{Stoll} we have 
	$$
		X(\A_k)_{\bullet}^{\fdesc} = \bigcup_{\tau \in \Sel(f)} f^\tau(Y^\tau(\A_k)_{\bullet}^{\fdesc})\,,
	$$
	where $f^\tau : Y^\tau \to X$ denote the twists of $f$ ranging over the subset $\Sel(f) \subset \HH^1(k,G)$ parameterizing twists of $Y$ that have adelic points. Note that $\Sel(f)$ is finite in the \'etale case by \cite[8.4.6]{Qpoints} and by \cite{Milne} in in general.
	
	Since $\Sel(f)$ is finite, there is a finite subset $S \subset \Omega_k$ of places of $k$ such that for any $\tau,\tau' \in \Sel(f)$ we have that $\res_v(\tau)$ and $\res_v(\tau')$ differ at some prime $v \in S$ or they agree for all places $v \in \Omega_k$. For each $v \in S$, the subsets $f^\tau(Y^\tau(k_v)) \subset X(k_v)$ are closed, being the continuous image of a compact set in a Hausdorff space. Moreover, for varying $\tau$ these sets are either pairwise equal or disjoint, so $$W_v := \left( \bigcup_{\tau \in \Sel(f)} f^\tau(Y^\tau(k_v)) \right) \setminus f(Y(k_v))$$ is a closed subset of $X(k_v)$ whose complement $W_v^c$ contains $f(Y(k_v))$ and is disjoint from $f^\tau(Y^\tau(k_v))$ for any $\tau \in \Sel(f)$ such that $\res_v(\tau) \ne 0$. Let $U := \prod_{v \in S} W_v^c \times \prod_{v \not\in S} X(k_v)$. This is an open subset of $X(\A_k)$ with the property that $U \cap X(\A_k)_{\bullet}^{\fdesc} = f(Y(\A_k)_{\bullet}^{\fdesc}) = U_f$.
\end{proof}

In the number field case we have the following strengthening which will be used in the following section.

\begin{Lemma}\label{lem:str}
	Let $f : X' \to X$ be an \'etale morphism. Then $f(X'(\A_k)_{\bullet}^\fdesc) \subset X(\A_k)_\bullet^\fdesc$ is open.
\end{Lemma}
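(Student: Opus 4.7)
The proof follows the strategy of Lemma~\ref{lem:3}, with the torsor structure replaced by the descent structure for arbitrary finite étale covers. In the situations of interest $X$ is proper, so an étale $f$ is automatically finite étale (over each connected component of $X'$); passing to connected components of $X'$, we may further assume that $X'$ is geometrically connected.

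The central input is an extension of \cite[Prop.~5.17]{Stoll} from torsors under finite abelian group schemes to arbitrary finite étale covers. Concretely, passing to the Galois closure $g : Y \to X$ of $f$ (a torsor under a finite étale group scheme $G$, with $X' = Y/H$ for some subgroup $H \subset G$) one obtains a decomposition
$$
X(\A_k)_\bullet^\fdesc = \bigcup_{\tau \in \Sel(f)} f^\tau(X'^\tau(\A_k)_\bullet^\fdesc),
$$
where $\Sel(f)$ indexes the twists of $f$ admitting adelic points. Finiteness of $\Sel(f)$ in the number field setting follows from Hermite-Minkowski, since the relevant twists are unramified outside a fixed finite set of places and of bounded degree.

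With this formula in hand, the argument of Lemma~\ref{lem:3} carries over essentially verbatim. Choose a finite set $S$ of places of $k$ at which distinct elements of $\Sel(f)$ are distinguished. For each $v \in S$, the sets $f^\tau(X'^\tau(k_v)) \subset X(k_v)$ are closed (compact images in a Hausdorff space), and for varying $\tau$ they are pairwise either equal or disjoint; for non-Galois $f$ this last property is deduced by reducing to the Galois closure torsor $Y \to X$. Setting
$$
W_v := \Bigl( \bigcup_{\tau \in \Sel(f)} f^\tau(X'^\tau(k_v)) \Bigr) \setminus f(X'(k_v)), \qquad U := \prod_{v \in S} W_v^c \times \prod_{v \not\in S} X(k_v),
$$
one verifies that $U \cap X(\A_k)_\bullet^\fdesc = f(X'(\A_k)_\bullet^\fdesc)$, exhibiting the latter as open.

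The principal obstacle is the extension of Stoll's descent formula and of the ``equal or disjoint'' property at local places to non-Galois covers. Both can be obtained from the corresponding statements for the Galois closure torsor $Y \to X$, once one sets up the correct bijection between twists of $f$ over $X$ and an appropriate non-abelian cohomology set (involving $N_G(H)/H$); the passage from $Y$ to $X' = Y/H$ is then purely formal.
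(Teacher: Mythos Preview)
Your instinct to pass to the Galois closure is right, but the way you use it leads to a genuine gap. The decomposition
\[
X(\A_k)_\bullet^\fdesc = \bigcup_{\tau \in \Sel(f)} f^\tau(X'^\tau(\A_k)_\bullet^\fdesc)
\]
need not hold for non-Galois $f$ with twists indexed by $H^1(k, N_G(H)/H)$. Take $G = S_3$ and $H = \langle (12) \rangle$: then $N_G(H) = H$, so $X' = Y/H$ has \emph{no} nontrivial twists, and your formula would force $f(X'(\A_k)_\bullet^\fdesc) = X(\A_k)_\bullet^\fdesc$. But locally $f(X'(k_v))$ consists of those $x_v$ whose fibre (a degree-$3$ \'etale $k_v$-algebra) has a $k_v$-point, which is typically a proper closed subset of $X(k_v)$; there is no mechanism forcing every adelic point surviving descent to land there. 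Consequently the final step ``$U \cap X(\A_k)_\bullet^\fdesc = f(X'(\A_k)_\bullet^\fdesc)$'' cannot be verified: in this example your construction gives $U = X(\A_k)_\bullet$.

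The paper sidesteps this entirely. It too takes a torsor $g : X'' \to X$ factoring through $f$ via a torsor $h : X'' \to X'$, but it applies Stoll's decomposition to the torsor $h$ over $X'$ rather than attempting one for $f$ over $X$:
\[
f\bigl(X'(\A_k)_\bullet^\fdesc\bigr) \;=\; f\Bigl(\bigcup_{\tau \in \Sel(h)} h^\tau\bigl(X''^\tau(\A_k)_\bullet^\fdesc\bigr)\Bigr) \;=\; \bigcup_{\tau \in \Sel(h)} (f \circ h^\tau)\bigl(X''^\tau(\A_k)_\bullet^\fdesc\bigr).
\]
Each $f \circ h^\tau$ is itself a twist of the torsor $g$ over $X$ (by the image of $\tau$ under $H^1(k,H) \to H^1(k,G)$), so each term on the right is open in $X(\A_k)_\bullet^\fdesc$ by the torsor case already handled in Lemma~\ref{lem:3}. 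Thus $f(X'(\A_k)_\bullet^\fdesc)$ is exhibited as a union of open sets, with no need to analyse twists of the non-Galois cover $f$ at all.
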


\begin{proof}
	The proof of the preceding lemma shows that $f(X'(\A_k)_{\bullet}^\fdesc) \subset X(\A_k)_\bullet^\fdesc$ is open for any torsor $f:X'\to X$ under a finite group scheme over $k$ such that $\Sel(f)$ is finite, which always holds in the number field case. If $f : X' \to X$ is merely \'etale, then we use that there is an \'etale torsor $g : X'' \to X$ which factors as the composition of $f$ with an \'etale torsor $h : X'' \to X'$ \cite[Proposition 5.3.9]{Szamuely}. By the previous lemma the sets $g^\tau(X''^\tau(\A_k)_{\bullet}^\fdesc) \subset X(\A_k)_{\bullet}^\fdesc$ are open for any twist of $g$. Thus by \cite[Prop. 5.17]{Stoll} applied to $h : X'' \to X'$, we see that $f(X'(\A_k)^\fdesc_\bullet) = \bigcup_{\tau \in \Sel(h)} (f\circ h^\tau)(X''^\tau(\A_k)_{\bullet}^\fdesc)$ is a union of open sets and hence open.
\end{proof}

\begin{proof}[Proof of Theorem~\ref{thm:topology}]
	This follows immediately from Lemma~\ref{lem:2} and Lemma~\ref{lem:3}.
\end{proof}

\section{The Section Conjecture}\label{sec:sec}

Suppose $X$ is a smooth, proper, geometrically irreducible variety over a number field $k$. By functoriality, any $k$-rational point determines a section of the fundamental exact sequence,
\begin{equation}\label{pi1}
	1 \to \pi_1(\Xbar) \to \pi_1(X) \to \pi_1(\Spec(k)) \to 1\,,
\end{equation}
well defined up to conjugation by an element of $\pi_1(\Xbar)$. Grothendieck's section conjecture asserts that when $X$ is a hyperbolic curve the map $X(k) \to \Sec(X/K)$ is a bijection. It is known that the full section conjecture for $X$ follows if one assumes a weaker form of the section conjecture for all geometrically connected \'etale coverings $X' \to X$, namely that the existence of a section in $\Sec(X'/k)$ implies the existence of a $k$-rational point on $X'$ \cite[Theorem 54 and Section 9.4]{Stix}. Corollary~\ref{thm:mainthm} is an analogue of this result for adelic points surviving descent. 

 A section is called \defi{Selmer} if for every place $v$ of $k$ its restriction to the decomposition group at $v$ arises from a $k_v$-point. When $X \subset A$ is a subvariety of an abelian variety such a $k_v$-point (if it exists) is unique (up to the usual caveat at archimedean places) by \cite[Proposition 73]{Stix}. This defines a map $\operatorname{loc}:\Sec^{\Sel}(X/k) \to X(\A_k)_\bullet^{\fdesc}$ from the set of Selmer sections (considered up to $\pi_1(\Xbar)$-conjugacy) to adelic points surviving descent. This map is surjective by \cite[Theorem 2.1]{HarariStix} (See also \cite[Theorem 144]{Stix} and \cite[Remark 8.9]{Stoll}). Surjectivity of $\operatorname{loc}$ shows that the section conjecture for $X$ implies $\WAdesc$ for $X$. It is shown in \cite[Proposition 2.13]{Bettsetal} that $\WAdesc$ is equivalent to the Selmer section conjecture, which asserts that $X(k) \to \Sec^{\Sel}(X/k)$ is a bijection.

The space $\Sec(X/k)$ of sections up to conjugacy is naturally equipped with a prodiscrete topology, inducing a subspace topology on $\Sec^{\Sel}(X/k)$. Theorem~\ref{thm:topology} implies the following.

\begin{Proposition}\label{prop:sec}
	Suppose $X \subset A$ is a closed subvariety of an abelian variety over a number field $k$. The surjective map $\operatorname{loc}:\Sec^{\Sel}(X/k) \to X(\A_k)_\bullet^{\fdesc}$ is continuous and open.
\end{Proposition}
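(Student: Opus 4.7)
The plan is to combine Theorem~\ref{thm:topology}, which supplies a basis for the topology on $X(\A_k)_\bullet^{\fdesc}$ using torsor pullbacks from $A$, with Lemma~\ref{lem:str}, which guarantees that the image of an \'etale map between sets of adelic points surviving descent is open. On the source side I will use the standard description of the prodiscrete topology on $\Sec(X/k)$: a basic neighborhood of a section $s_0$ is cut out by congruence with $s_0$ modulo an open normal subgroup $N \subset \pi_1(X)$, equivalently modulo a finite \'etale Galois cover $Y \to X$; twisting $Y$ by $s_0$ realizes this as the set of sections lifting through the \'etale map $f : Y^{s_0} \to X$ to $\Sec(Y^{s_0}/k)$.

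For continuity, by Theorem~\ref{thm:topology} it suffices to show that $\operatorname{loc}^{-1}(U_f)$ is open for every basic open $U_f = f(X'(\A_k)_{\bullet}^{\fdesc})$, where $f : X' \to X$ is pulled back from a geometrically connected torsor $T' \to T$ under a finite $k$-group scheme $G$. Given $s_0 \in \operatorname{loc}^{-1}(U_f)$, the reduction of $s_0$ modulo $\pi_1(X')$ determines a twist class $\tau_0 \in \HH^1(k,G)$, and the set $W = \{s \in \Sec^{\Sel}(X/k) : s \equiv s_0 \pmod{\pi_1(X')}\}$ is clopen in the prodiscrete topology since it is cut out by a condition on the finite quotient $\pi_1(X)/\pi_1(X') \cong G(\bar k)$. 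Every $s \in W$ lifts to a global section of $\pi_1(X'^{\tau_0}) \to \Gal_k$; this lift is automatically Selmer, because local sections on a subvariety of an abelian variety correspond to unique local $k_v$-points. Hence $\operatorname{loc}(s) \in f^{\tau_0}(X'^{\tau_0}(\A_k)_\bullet^{\fdesc})$. As in the proof of Lemma~\ref{lem:3}, the sets $f^\tau(X'^\tau(\A_k)_\bullet^{\fdesc})$ are pairwise equal or disjoint, and since $f^{\tau_0}(X'^{\tau_0}(\A_k)_\bullet^{\fdesc})$ meets $U_f$ at $\operatorname{loc}(s_0)$, the two sets coincide. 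Therefore $\operatorname{loc}(W) \subseteq U_f$, exhibiting $W$ as an open neighborhood of $s_0$ inside $\operatorname{loc}^{-1}(U_f)$.

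For openness, take a basic open $W_{Y,s_0} \subset \Sec^{\Sel}(X/k)$ around $s_0$ associated to a finite \'etale Galois cover $Y \to X$, and let $f : Y^{s_0} \to X$ be the twist of $Y$ by $s_0$. Then $W_{Y,s_0}$ consists of exactly those Selmer sections lifting through $f$ to $\Sec^{\Sel}(Y^{s_0}/k)$. Under $\operatorname{loc}$ every such $s$ maps to $f(\operatorname{loc}(t))$ for its lift $t$, and conversely each point of $f(Y^{s_0}(\A_k)_\bullet^{\fdesc})$ is hit by applying Harari--Stix surjectivity to the subvariety $Y^{s_0}$ of its Albanese torsor. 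Thus $\operatorname{loc}(W_{Y,s_0}) = f(Y^{s_0}(\A_k)_\bullet^{\fdesc})$, which is open in $X(\A_k)_\bullet^{\fdesc}$ by Lemma~\ref{lem:str}.

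The main obstacle is the bookkeeping that matches the twist-class description of basic opens on the section side with the adelic twist decomposition on the descent side. The key point is that the twist class $\tau(s) \in \HH^1(k,G)$ of a Selmer section with respect to a cover $X' \to X$ coincides with the adelic twist class of $\operatorname{loc}(s)$ in Stoll's decomposition, and this rests on the bijection between local sections and local $k_v$-points on subvarieties of abelian varieties.
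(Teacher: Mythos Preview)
Your proposal is correct and follows essentially the same route as the paper's proof: openness via Lemma~\ref{lem:str} applied to the identification $\operatorname{loc}(V_{X'}) = f(X'(\A_k)_\bullet^{\fdesc})$, and continuity via the torsor basis of Theorem~\ref{thm:topology} together with the fact that the preimage of each $U_f$ is a basic open on the section side. The paper compresses your twist bookkeeping into a single citation of \cite[Lemma~2.15]{Bettsetal} (which gives that a Selmer section of $X$ lifting to $X'$ already lifts to a \emph{Selmer} section of $X'$) and then simply records the two identities $\operatorname{loc}(V_{X'}) = f(X'(\A_k)_\bullet^{\fdesc})$ and $\operatorname{loc}^{-1}(f(X'(\A_k)_\bullet^{\fdesc})) = V_{X'}$; you instead re-derive the relevant pieces by hand.

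One imprecision worth tightening: your appeal to Lemma~\ref{lem:3} for the statement that the sets $f^\tau(X'^\tau(\A_k)_\bullet^{\fdesc})$ are pairwise equal or disjoint is not quite what that lemma proves. Lemma~\ref{lem:3} establishes this for the \emph{local} images $f^\tau(X'^\tau(k_v))$. To get the global statement for the $\fdesc$ sets you need the further observation, implicit in the construction of the open set $U$ in that proof, that $f^\tau(X'^\tau(\A_k)_\bullet^{\fdesc}) = U_\tau \cap X(\A_k)_\bullet^{\fdesc}$ with $U_\tau$ depending only on the local restrictions $(\res_v(\tau))_v$; hence two classes in $\Sel(f)$ that agree everywhere locally give the same set, and two that differ somewhere give disjoint sets. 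This is easy to add, but as written the citation does not carry the weight you place on it. (Also, when writing $W = \{s \equiv s_0 \pmod{\pi_1(X')}\}$ you are implicitly using that $X'$ is geometrically connected so that $\pi_1(X')$ sits as an open normal subgroup of $\pi_1(X)$; this is not automatic for a pullback from $T$, and is worth a word.)
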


\begin{proof}
	The sets 
	\[
		U_{X'} := \textup{image}\left(\Sec(X'/k) \to \Sec(X/k)\right)
	\]
	as $X' \to X$ ranges over geometrically connected \'etale coverings of $X$ form a basis for the topology on $\Sec(X/k)$ \cite[Section 4.2]{Stix}. The sets $V_{X'} := U_{X'} \cap \Sec^{\Sel}(X/k)$ thus form a basis for the subspace topology on $\Sec^{\Sel}(X/k) \subset \Sec(X/k)$.  By \cite[Lemma 2.15]{Bettsetal} (which is stated for an \'etale covering of hyperbolic curves, but whose proof works for any \'etale covering of geometrically connected varieties) we have that $V_{X'} = \textup{image}\left(\Sec^{\Sel}(X'/k) \to \Sec^{\Sel}(X/k)\right)$. Thus for any \'etale $f : X' \to X$ we have
	\begin{enumerate}
		\item\label{it1} $\operatorname{loc}(V_{X'}) = f(X'(\A_k)^{\fdesc}_\bullet)$, and
		\item\label{it2} $\operatorname{loc}^{-1}(f(X'(\A_k)^{\fdesc}_\bullet)) = V_{X'}$.
	\end{enumerate}
	Lemma~\ref{lem:str} together with~\eqref{it1} shows that $\operatorname{loc}$ is open. Lemma~\ref{lem:2} together with~\eqref{it2} shows that $\operatorname{loc}$ is continuous.
\end{proof}

\section{The Mordell-Weil Sieve}

\begin{Theorem}\label{thm:MW}
	Let $X \subset A$ be a closed subvariety of an abelian variety over a global field $k$. If $\HPmw$ holds for every $Y \subset A$ which is the pullback to $X$ of a morphism $\rho : A \to A$ of the form $\rho(a) = na + P$ with $n \ge 1$ and $P \in A(k)$, then $\WAmw$ holds for $X$.
\end{Theorem}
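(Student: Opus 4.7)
My approach is to mirror the proof of Lemma~\ref{lem:2} and Corollary~\ref{thm:mainthm}, with the role of $n$-coverings played by the maps $\rho : A \to A$ of the form $\rho(a) = na + P$ appearing in the hypothesis. Given $x \in X(\A_k)_\bullet \cap \overline{A(k)}$ and an open neighborhood $U$ of $x$ in $X(\A_k)_\bullet$, I need to produce some $Q \in X(k)$ whose image lies in $U$. The first step is to extend $U$ to an open $V \subset A(\A_k)_\bullet$ with $x \in V$ and $V \cap X(\A_k)_\bullet \subset U$, then apply Lemma~\ref{lem:topA} to the translated neighborhood $V - x$ of the identity to obtain an integer $n \ge 1$ with $x + nA(\A_k)_\bullet \subset V$.

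The crucial step is to find $P \in A(k)$ and $z \in \overline{A(k)}$ with $x = P + nz$. The plan is to establish the identity
\[
    \overline{A(k)} \;=\; A(k) + n\overline{A(k)}.
\]
Since $A$ is proper, each $A(k_v)$ is compact, and hence so is $A(\A_k)_\bullet$ together with its closed subgroup $\overline{A(k)}$. The image $n\overline{A(k)}$ is then the continuous image of a compact set, hence closed. The Mordell--Weil theorem makes $A(k)/nA(k)$ finite, so $A(k) + n\overline{A(k)}$ is a finite union of cosets of $n\overline{A(k)}$ and is closed. As it contains $A(k)$, it contains $\overline{A(k)}$; the reverse inclusion is trivial.

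With $P$ and $z$ in hand, set $\rho(a) = na + P$ and $Y = \rho^{-1}(X) \subset A$. Then $z$ lies in $Y(\A_k)_\bullet \cap \overline{A(k)}$ because $\rho(z) = nz + P = x \in X(\A_k)_\bullet$. The hypothesis $\HPmw$ applied to $Y$ yields a rational point $y_0 \in Y(k)$. Its image $\rho(y_0) \in X(k)$ satisfies
$\rho(y_0) - x = n(y_0 - z) \in n\overline{A(k)} \subset nA(\A_k)_\bullet$,
so $\rho(y_0) \in (x + nA(\A_k)_\bullet) \cap X(\A_k)_\bullet \subset V \cap X(\A_k)_\bullet \subset U$, exhibiting a $k$-rational point of $X$ in $U$ and showing $x \in \overline{X(k)}$.

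The main technical point I expect is establishing $\overline{A(k)} = A(k) + n\overline{A(k)}$; this relies on combining the finiteness of $A(k)/nA(k)$ from Mordell--Weil with the compactness of $A(\A_k)_\bullet$ afforded by the properness of $A$.
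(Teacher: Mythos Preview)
Your proof is correct and follows the same overall strategy as the paper: given $x\in X(\A_k)_\bullet\cap\overline{A(k)}$ and a neighbourhood $U$, choose $n$ via Lemma~\ref{lem:topA}, lift $x$ along some $\rho(a)=na+P$ to a point $z\in Y(\A_k)_\bullet\cap\overline{A(k)}$, apply $\HPmw$ to $Y$, and push the resulting rational point back into $U$. The difference lies in how the lifting $x=P+nz$ with $P\in A(k)$ and $z\in\overline{A(k)}$ is obtained. The paper builds a coherent family $(P_m)$ with $x\in P_m+mA(\A_k)_\bullet$ and $P_{\ell m}\equiv P_m\bmod mA(k)$ using K\"onig's Lemma, then constructs $z$ as an explicit limit of quotients $R_m$; along the way this also identifies $\overline{A(k)}$ with the profinite completion $\widehat{A(k)}$. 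Your argument replaces all of this with a short compactness step: $\overline{A(k)}$ is compact, hence $n\overline{A(k)}$ is closed, and weak Mordell--Weil makes $A(k)+n\overline{A(k)}$ a finite union of closed cosets containing $A(k)$, forcing $\overline{A(k)}=A(k)+n\overline{A(k)}$. This is more elementary and to the point; the paper's version is more constructive and yields the auxiliary isomorphism $\widehat{A(k)}\simeq\overline{A(k)}$ as a byproduct, but that is not needed for the theorem itself.
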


\begin{proof}
	Let $X \subset A$ be a closed subvariety and suppose $x \in X(\A_k)_\bullet \cap \overline{A(k)}$.  Let $U \subset X(\A_k)_\bullet$ be an open subset containing $x$. For the first statement it suffices to show that $U \cap X(k)$ is nonempty. Let $V \subset A(\A_k)_\bullet$ be an open subset such that $U = V \cap X(\A_k)_\bullet$. By Lemma~\ref{lem:topA} there is an integer $n_0$ such that $(x + n_0A(\A_k)_{\bullet}) \subset V$
	
	Since $x \in \overline{A(k)}$ there exists a sequence of points $P_n \in A(k)$ such that
	\begin{enumerate}
		\item\label{it:1} $x \in P_n + nA(\A_k)_{\bullet}$ for all $n \ge 1$, and
		\item\label{it:2} $P_{mn} \equiv P_n \bmod nA(k)$, for all $m,n \ge 1$.
	\end{enumerate}
	The existence of $P_n$ satisfying~\eqref{it:1} follows from Lemma~\ref{lem:topA}. That these $P_n$ can be chosen to also satisfy~\eqref{it:2} follows from K\"onig's Lemma. This same argument shows as in \cite[Theorem E]{PoonenVoloch} that the inclusion $A(k) \to A(\A_k)_{\bullet}$ induces an isomorphism $\widehat{A(k)} \simeq \overline{A(k)}$ between the profinite completion of $A(k)$ and its topological closure in $A(\A_k)_{\bullet}$.
	
	For $m \ge 1$, let $R_{m} \in A(k)$ be such that $P_{mn_0} - P_{n_0} = n_0R_{m}$. The possible choices for $R_{m}$ differ by an element in $A(k)[n]$. Since $A(k)_{\textup{tors}}$ is finite we can choose the $R_{m}$ so that for any $\ell,m \ge 1$ we have $R_{\ell m} \equiv R_{m} \bmod mA(k)$. With such a choice, the limit $y := \lim_{m \to \infty} R_{m!} \in \overline{A(k)}$ exists. Let $\rho : A \to A$ be the morphism given by $\rho(a) = n_0a + P_{n_0}$. Since $\rho : A(\A_k)_\bullet \to A(\A_k)_\bullet$ is continuous, we have $\rho(y) = \rho(\lim_{m} R_{m!}) = \lim_{m} n_0R_{m!} + P_{n_0} = \lim_{m} P_{n_0m!} = x$.
	
	Let $Y \to X$ be the pullback of $\rho$ to $X$. Then $Y \subset A$ and from above we have that $y \in Y(\A_k)_{\bullet} \cap \overline{A(k)}$. Assuming $\HPmw$ holds for $Y$ we find $Y(k) \ne \emptyset$. Since $\rho(Y(\A_k)_\bullet) \subset U$, this shows that $U\cap X(k)\ne \emptyset$.
\end{proof}

Under the additional assumption that $X(k)$ is finite we have the converse to Theorem~\ref{thm:MW}. 

\begin{Proposition}
	Suppose $X \subset A$ is a subvariety of an abelian variety over a global field with $X(k)$ finite. If $\WAmw$ holds for $X$, then $\WAmw$ holds for every $Y \subset A$ obtained as the pullback of a morphism $\rho : A \to A$ of the form $\rho(a) = na + b$ with $n \ge 1$ and $b \in A(k)$.
\end{Proposition}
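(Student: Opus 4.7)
My plan is to prove the stronger statement $Y(\A_k)_\bullet \cap \overline{A(k)} = Y(k)$. Granting this, since $X(k)$ is finite and every non-empty fiber $\rho^{-1}(P)(k)$ is a coset of $A[n](k)$, the set $Y(k) = \bigcup_{P \in X(k)}\rho^{-1}(P)(k)$ is finite and hence closed in the Hausdorff space $Y(\A_k)_\bullet$; so $\overline{Y(k)} = Y(k)$, which together with the stronger statement yields $\WAmw$ for $Y$. Fix $y \in Y(\A_k)_\bullet \cap \overline{A(k)}$; the goal is to show $y \in Y(k)$.

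First I would push $y$ forward by $\rho$. Since $\rho$ is continuous, preserves $A(k)$, and sends $Y$ into $X$, we have $\rho(y) \in X(\A_k)_\bullet \cap \overline{A(k)} = \overline{X(k)}$ by the hypothesis $\WAmw$ for $X$. Finiteness of $X(k)$ in the Hausdorff space $X(\A_k)_\bullet$ then gives $\overline{X(k)} = X(k)$, so $\rho(y) = P$ for some $P \in X(k)$, and $y$ lies in the adelic points of the fiber $T := \rho^{-1}(P) \subset Y$, a torsor under $A[n]$.

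Second, I would exploit the isomorphism $\overline{A(k)} \simeq \widehat{A(k)}$ already used in the proof of Theorem~\ref{thm:MW}. The relation $ny = P - b$ in $\widehat{A(k)}$, combined with weak Mordell--Weil so that $\widehat{A(k)}/n\widehat{A(k)} = A(k)/nA(k)$, forces $P - b \in nA(k)$; choosing $a_0 \in A(k)$ with $na_0 = P - b$ produces a $k$-point $a_0 \in T(k)$. Then $y - a_0$ lies in $\overline{A(k)} \cap A[n](\A_k)_\bullet$, and the parallel identification $\widehat{A(k)}[n] = A(k)[n]$ (embedded diagonally in $A(\A_k)_\bullet$) gives $y - a_0 = \tau$ for some $\tau \in A(k)[n]$. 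Hence $y = a_0 + \tau$ is a $k$-rational point of $A$, and $\rho(y) = P \in X(k)$ forces $y \in Y(k)$.

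The main obstacle is justifying the two structural statements about the profinite completion: $A(k) \cap n\widehat{A(k)} = nA(k)$ and $\widehat{A(k)}[n] = A(k)[n]$. Both are immediate from Mordell--Weil in the number field case, since then $A(k)$ is finitely generated with torsion-free part free of finite rank. In the function field setting they require the same structural input on $\overline{A(k)}$ that was invoked in the proof of Theorem~\ref{thm:MW}, together with a little additional care when $n$ is divisible by the characteristic; once these identifications are available the rest of the argument is essentially formal.
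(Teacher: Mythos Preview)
Your argument is correct and mirrors the paper's: push $y$ forward by $\rho$, use $\WAmw$ for $X$ together with finiteness of $X(k)$ to get $\rho(y)=P\in X(k)$, and then reduce to the zero-dimensional fiber $Z=\rho^{-1}(P)$. The paper handles this last step by simply citing that zero-dimensional subschemes of $A$ satisfy $\WAmw$ (\cite[Theorem~3.11]{Stoll} over number fields, \cite[Theorem~E]{PoonenVoloch} and \cite[Proposition~3.1]{CreutzVoloch} over function fields), whereas you reprove the needed special case directly from $\overline{A(k)}\simeq\widehat{A(k)}$ and the structure of the profinite completion of a finitely generated abelian group. Two small points: the input you actually use is the full Mordell--Weil theorem (finite generation of $A(k)$), not merely the weak form; and since Lang--N\'eron gives finite generation over global function fields as well, the identities $A(k)\cap n\widehat{A(k)}=nA(k)$ and $\widehat{A(k)}[n]=A(k)[n]$ are purely group-theoretic and hold uniformly, so no extra care is needed when $p\mid n$.
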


\begin{proof}
	Let $y \in Y(\A_k)_\bullet \cap \overline{A(k)}$. Then $x = \rho(y) \in X(\A_k)_\bullet \cap \overline{A(k)} = \overline{X(k)} = X(k)$ since $X$ satisfies $\WAmw$ and $X(k)$ is finite. The zero-dimensional subscheme $Z := \rho^{-1}(x) \subset Y \subset A$ satisfies $\WAmw$ by \cite[Theorem 3.11]{Stoll} in the number field case and by \cite[Theorem E]{PoonenVoloch} and~\cite[Proposition 3.1]{CreutzVoloch} in the function field case. Since $y \in Z(\A_k)_\bullet \cap \overline{A(k)}$ we conclude that $y \in Y(k)$.
\end{proof}	


\section*{Acknowledgements}
This research was supported by the Marsden Fund Council, managed by Royal Society Te Ap\=arangi.

\section{References}

\begin{biblist}

%
%

\bib{Bettsetal}{article}{
	author={Betts, L. Alexander},
	author={Kumpitsch, Theresa},
	author={L\"udtke, Martin},
	title={Chabauty-Kim and the section conjecture for locally geometric sections},
	eprint={arXiv:2305.09462}
	}

\bib{CreutzVoloch}{article}{
	author={Creutz, Brendan},
  	author={Voloch, Jos\'{e} Felipe},
	title={The Brauer-Manin obstruction for nonisotrivial curves over global function fields},
	eprint={arXiv:2308.13075}
	note={with an appendix by D. R\"ossler}
	}

\bib{HarariStix}{article}{
   author={Harari, David},
   author={Stix, Jakob},
   title={Descent obstruction and fundamental exact sequence},
   conference={
      title={The arithmetic of fundamental groups---PIA 2010},
   },
   book={
      series={Contrib. Math. Comput. Sci.},
      volume={2},
      publisher={Springer, Heidelberg},
   },
   isbn={978-3-642-23904-5},
   isbn={978-3-642-23905-2},
   date={2012},
   pages={147--166},
}

\bib{Mattuck}{article}{
   author={Mattuck, Arthur},
   title={Abelian varieties over $p$-adic ground fields},
   journal={Ann. of Math. (2)},
   volume={62},
   date={1955},
   pages={92--119},
   issn={0003-486X},
}

\bib{Milne}{article}{
   author={Milne, J. S.},
   title={Elements of order $p$ in the Tate-\v{S}afarevi\v{c} group},
   journal={Bull. London Math. Soc.},
   volume={2},
   date={1970},
   pages={293--296},
   issn={0024-6093},
}

\bib{Poonen}{article}{
   author={Poonen, Bjorn},
   title={Heuristics for the Brauer-Manin obstruction for curves},
   journal={Experiment. Math.},
   volume={15},
   date={2006},
   number={4},
   pages={415--420},
   issn={1058-6458}
}

\bib{Qpoints}{book}{
   author={Poonen, Bjorn},
   title={Rational points on varieties},
   series={Graduate Studies in Mathematics},
   volume={186},
   publisher={American Mathematical Society, Providence, RI},
   date={2017},
   pages={xv+337},
   isbn={978-1-4704-3773-2},
}

\bib{PoonenVoloch}{article}{
   author={Poonen, Bjorn},
   author={Voloch, Jos\'e Felipe},
   title={The Brauer-Manin obstruction for subvarieties of abelian varieties
   over function fields},
   journal={Ann. of Math. (2)},
   volume={171},
   date={2010},
   number={1},
   pages={511--532},
   issn={0003-486X},
}

\bib{Scharaschkin}{book}{
   author={Scharaschkin, Victor},
   title={Local-global problems and the Brauer-Manin obstruction},
   note={Thesis (Ph.D.)--University of Michigan},
   publisher={ProQuest LLC, Ann Arbor, MI},
   date={1999},
   pages={59},
   isbn={978-0599-63464-0},
}

\bib{Serre}{book}{
   author={Serre, Jean-Pierre},
   title={Lie algebras and Lie groups},
   series={Lecture Notes in Mathematics},
   volume={1500},
   edition={2},
   note={1964 lectures given at Harvard University},
   publisher={Springer-Verlag, Berlin},
   date={1992},
   pages={viii+168},
   isbn={3-540-55008-9},
}

            \bib{Skorobogatov}{book}{
               author={Skorobogatov, Alexei N.},
               title={Torsors and rational points},
               series={Cambridge Tracts in Mathematics},
               volume={144},
               publisher={Cambridge University Press, Cambridge},
               date={2001},
               pages={viii+187},
               isbn={0-521-80237-7},
            }		

\bib{Stix}{book}{
   author={Stix, Jakob},
   title={Rational points and arithmetic of fundamental groups},
   series={Lecture Notes in Mathematics},
   volume={2054},
   note={Evidence for the section conjecture},
   publisher={Springer, Heidelberg},
   date={2013},
   pages={xx+249},
   isbn={978-3-642-30673-0},
   isbn={978-3-642-30674-7},
}

\bib{Stoll}{article}{
   author={Stoll, Michael},
   title={Finite descent obstructions and rational points on curves},
   journal={Algebra Number Theory},
   volume={1},
   date={2007},
   number={4},
   pages={349--391},
   issn={1937-0652},
}

\bib{Szamuely}{book}{
   author={Szamuely, Tam\'{a}s},
   title={Galois groups and fundamental groups},
   series={Cambridge Studies in Advanced Mathematics},
   volume={117},
   publisher={Cambridge University Press, Cambridge},
   date={2009},
   pages={x+270},
   isbn={978-0-521-88850-9},
}

\end{biblist}

\end{document}